\newtheorem{theorem}{Theorem}
\newtheorem{lemma}[theorem]{Lemma}
\newtheorem{prop}[theorem]{Proposition}
\newtheorem*{theorem*}{Theorem}
\newtheorem*{lemma*}{Lemma}
\theoremstyle{definition}
\newtheorem{remark}{Remark}
\newtheorem*{remark*}{Remark}
\newtheorem*{example*}{Example}
\newtheorem*{er*}{Examples and Remarks}
\newcommand{\nrm}[1]{\Vert#1\Vert}
\newcommand{\nnrm}[1]{{\vert\kern-0.25ex\vert\kern-0.25ex\vert #1 \vert\kern-0.25ex\vert\kern-0.25ex\vert}}
\newcommand{\supp}{{\mathrm{supp}}\,}
\newcommand{\lap}{\Delta}
\newcommand{\rd}{\partial}
\newcommand{\nb}{\nabla}
\newcommand{\dlt}{\delta}
\newcommand{\eps}{\epsilon}
\newcommand{\omg}{\omega}
\begin{document}

\title[vortex thinning]
{Enstrophy dissipation and  vortex thinning 
for the incompressible 2D Navier-Stokes equations} 
 
\author{In-Jee Jeong}
\address{School of Mathematics, Korea Institute for Advanced Study, Seoul, Republic of Korea}
\email{ijeong@kias.re.kr}

%    Information for second author
\author{Tsuyoshi Yoneda} 
\address{Graduate School of Mathematical Sciences, University of Tokyo, Komaba 3-8-1 Meguro, 
Tokyo 153-8914, Japan } 
\email{yoneda@ms.u-tokyo.ac.jp} 
%\thanks{The second author was supported in part by NSF Grant \#000000.} 

%    General info 
\subjclass[2010]{Primary 35Q35; Secondary 35B30; Tertiary 76F99} 

\date{\today} 
%\dedicatory{This paper is dedicated to our advisors.} 

\keywords{Euler equations, {Navier-Stokes equations, enstrophy dissipation}, {vortex thinning}.} 

\begin{abstract} {Vortex thinning is one of the main mechanisms of two-dimensional turbulence.}
%In steady-state two dimensional turbulence, vortex thinning process is one of the attractive mechanism.
By direct numerical simulation to the two-dimensional Navier-Stokes equations with small-scale forcing and large-scale damping, Xiao-Wan-Chen-Eyink (2009) found an evidence that inverse energy cascade may proceed 
with the vortex thinning mechanism. 
On the other hand, Alexakis-Doering (2006) calculated upper bound of the bulk averaged  enstrophy dissipation rate of the steady-state two dimensional turbulence.
%In particular, they showed it vanishes in the order $Re^{-1}$, when the external force is only on the single scale, and were emphasizing that, in this $Re^{-1}$ scaling, the flow exhibits laminar behavior, since energy is concentrated at relatively long length scales (independent of Reynolds number).
{In this paper, we show that vortex thinning induces enhanced dissipation with strictly slower vanishing order of the enstrophy dissipation than $Re^{-1}$.}
\end{abstract}

\maketitle

\section{Introduction} \label{sec:Intro} 

The mechanism of forward and inverse energy cascade of two-dimensional turbulence has been attracting many physicists but nevertheless is not well clarified so far.
Theoretical studies of 2D turbulence usually employ statistics and sometimes
impose assumptions on homogeneity isotropy.
Alexakis and  Doering \cite{AD} derived some rigorous upper
bounds for the long time averaged bulk energy and enstrophy dissipation rates for 2D
statistically stationary flows sustained by a variety of driving forces.
In particular, they showed the enstrophy dissipation  vanishes in the order $Re^{-1}$
 when the external force is only on the single scale.
They are  emphasizing  that, 
 in this $Re^{-1}$ scaling, the flow exhibits laminar behavior, 
since
 energy is concentrated at relatively long length 
scales (independent of Reynolds number).
Thus, their result tells us that, at least, 2D turbulence behavior must provide vanishing order which is strictly slower than the order $Re^{-1}$.
Let us explain more precisely.
The two dimensional Navier-Stokes equations are described as follows:
\begin{equation} \label{NS}
\left\{
\begin{aligned} 
&\partial_tu -\nu\Delta u+ u\cdot\nabla u = - \nabla p+f, 
\qquad 
t \geq 0, \; x \in 
(\mathbb{R}/2\mathbb{Z})^2
%[-1,1)^2
\\ 
&\mathrm{div}\, u = 0 
\\ 
&u(t=0) = u_0 
\end{aligned}
\right.
\end{equation} 
where $u=u(t,x)=(u_1(t,x),u_2(t,x))$, $p=p(t,x)$ and  $f=(f_1(t,x),f_2(t,x))$ denote the velocity field, the pressure function of the fluid and the external force respectively. The case $\nu=0$ is called the Euler equations. {Introducing the vorticity $\omg=\nb\times u$, we can rewrite \eqref{NS} as the following 2D vorticity equations: 
	\begin{equation}\label{NS:vort}
	\begin{split}
	\partial_t\omega -\nu\Delta \omega+ (u\cdot\nabla) \omega = \nb\times f,\quad x \in \mathbb{T}^2 := (\mathbb{R}/2\mathbb{Z})^2, 
	\end{split}
	\end{equation} where the velocity $u$ is determined by the (periodic) 2D Biot-Savart law: \begin{equation*}
	\begin{split}
	u(t,x) = \int_{\mathbb{T}^2} K_2 \large( x - y \large) \omega(t,y) \, dy, 
	\end{split}
	\end{equation*} with \begin{equation*}
	\begin{split}
	K_2(x)  = \frac{1}{2\pi} \frac{(-x_2,x_1)}{|x|^2} \quad (\text{with reflections}).
	\end{split}
	\end{equation*}}

 Assume that the external force $f$ involves only a single length scale,
namely, $-\Delta f\sim k_f^2 f$.
Assume further that the flow is a statistical steady state of body forced
two-dimensional turbulence.   
Then the solution $u$ (depending on $\nu$)  to the Navier-Stokes equations \eqref{NS}
satisfy 
\begin{equation*}
\nu\langle|\nabla \omega|^2\rangle=\chi\leq \nu k^4_f U^2, 
%\quad\text{uniformly in}\quad\nu,
\end{equation*}
where $\chi$ is the enstrophy dissipation, $\langle\cdot\rangle$ is some averaging (in this paper we regard it as both space and time averages) and $U$ is the representative velocity.
Note that, mathematically, if the enstrophy is finite, then this $\chi$ is always vanishing for $\nu\to 0$ (see \cite[Proposition 2]{FML} for example).

The aim of this paper is to find a specific vorticity  {configuration}
which provides strictly slower vanishing order of the enstrophy dissipation than $Re^{-1}$, and in this case ``vortex thinning" should be a strong candidate for it. 
 {To be more precise,} Xiao-Wan-Chen-Eyink (\cite{XWCE}) investigated inverse energy cascade in steady-state two-dimensional turbulence by direct numerical simulation of the two-dimensional Navier-Stokes equations with small scale forcing and  large scale damping. 
In their numerical work, they used an alternative  equation  (with a damping term),
and found strong evidence that inverse energy cascade may proceed with vortex thinning mechanism.
According to their evidence, there is a tensile turbulent stress in directions parallel to the isolines of small-scale vorticity.  
%In their analysis,  the following large-scale strain tensor was  introduced:
%\begin{equation*}
%\bar S_\ell:=\frac{1}{2}\left[(\nabla \bar u_\ell)+(\nabla \bar u_\ell)^{T}\right],
%\end{equation*}
%where $\bar u_\ell$ is large-scale velocity defined by
%\begin{equation*}
%\bar u_\ell(x,t):=\int_{\mathbb{R}^2}G_\ell(r)u(x+r,t)dr
%\end{equation*}
%with $G_\ell(r)=\ell^{-2}G(r/\ell)$ and $G(r)=\sqrt{6/\pi}\exp(-6|r|^2)$. 
Thus the small-scale circular vortex will be stretched into elliptical shape, which is nothing more than ``vortex thinning" phenomena.
%According to Kelvin's theorem of conservation of circulation,
%the magnitude of the velocity around the vortex decrease, and then its 
%energy should be reduced. The energy lost by the small-scale vortex is transferred to the large scale.
%In fact, the Reynolds stress created by the thinned vortex is primarily along the stretching direction,
%positive. 
%Hence, the deviatoric stress is positively aligned with the large-scale strain (this energy transfer mechanism can be justified by (3.4) and (3.11) in \cite{XWCE}). 
%It does negative work, namely, negative eddy viscosity.  
%For more detail of their argument, see \cite[Section 3 and Section 5]{XWCE}. 
%We notice similar phenomena in  decaying two-dimensional turbulence.
%See \cite[Section 9]{Ta} for example.
%A relation between negative eddy viscosity and vortex-thinning is already considered.
%See \cite{Kr}.
%In \cite{Kr}, he considered an asymptotic negative eddy viscosity is produced at widely separated scaled in a simple model of the thinning of small-scale vortex bold by a spatial constant strain field. He did not propose vortex-thinning as an explanation of the scale local dynamics of steady-state inverse energy cascade.

In this paper we focus on a sequence of prescribed initial vorticity which
provides vortex thinning behavior, and show that 
 the vortex thinning behavior
provides strictly slower vanishing order  {of the enstrophy dissipation} than $Re^{-1}$.
%of the averaged enstrophy dissipation.
%Let us formulate our setting more precisely.  
%We  assume $\phi$ is an external force with odd symmetries on $x_1$ and $x_2$ axes. 
%Since we would like to focus on the vortex-thinning formation itself, from now on, we neglect the external force.
Now we formulate mathematically this enstrophy dissipation problem more precisely. 
%\textcolor{green}{Our specific aim is to}
Let us find a sequence of viscosity $\{\nu_n\}_n$ ($\nu_n\to 0$), a sequence of initial vorticity $\{\omega_{0,n}^{\nu_n}\}_n\subset H^1(\mathbb{T}^2)\cap C^\infty(\mathbb{T}^2)$ with $\|\omega_{0,n}^{\nu_n}\|_{H^1}\lesssim 1$
and a function $\varphi$ with $\varphi(\nu)\to 0$ ($\nu\to 0$),
 such that the corresponding solutions to the Navier-Stokes equations $\{\omega^{\nu_n}_n\}_n$ satisfy the following: for any $T>0$ fixed,
\begin{equation}\label{math-enstrophy-dissipation}
\liminf_{n\to \infty}\, \varphi(\nu_n) \frac{1}{T}\int_0^{T}\int_{\mathbb T^2}|\nabla\omega^{\nu_n}_n(t,x)|^2dxdt >0.
\end{equation}  {Since we are interested in a finite-time result, we shall neglect the external force and take $Re=\nu^{-1}$.}

Note that, in physics, $H^1$-norm of vorticity is called ``palinstrophy" (see \cite[Section 8]{L} for example).
The above formulation is the most strict one, since we are imposing 
 uniformly boundedness $\|\omega_{0,n}^{\nu_n}\|_{H^1}\lesssim 1$ and taking arbitrary time $T>0$. These are necessary to avoid ``trivial enstrophy dissipation": If we choose $\{\omega_{0,n}\}_n$ satisfying $\|\omega_{0,n}\|_{H^1}\to\infty$,
and choose $T_n$ $(T_n\to 0, n\to\infty$) to be 
$\sup_{0<t<T_n}\nu_n\|\omega_{0,n}-\omega_{0,n}^{\nu_n}(t)\|^2_{H^1}<\epsilon$ (for sufficiently small $\epsilon>0$) with  $\nu_n\sim\|\omega_{0,n}\|_{H^1}^{-2}$,
then we trivially have (c.f. \cite[Section 6]{FML})
\begin{equation}
\lim_{n\to \infty}\, \nu_n \frac{1}{T_n}\int_0^{T_n}\int_{\mathbb{T}^2}|\nabla\omega^{\nu_n}_n(t,x)|^2dxdt \sim 1.
\end{equation}
This is nothing more than trivial enstrophy dissipation we want to avoid.

We now construct our initial data.  Given any smooth radial bump function $0 \leq \phi \leq 1$ with support in the ball $B(0,1/4)$ 
let 
\begin{align*} 
\phi_0(x_1, x_2) 
= 
\sum_{\varepsilon_1, \varepsilon_2 = \pm 1} 
\varepsilon_1 \varepsilon_2 \phi(x_1 {-} \varepsilon_1, x_2 {-} \varepsilon_2). 
\end{align*} 
We assume further that $\phi = 1$ in the ball $B(0,1/8)$. Clearly, the function $\phi_0$ is odd with respect to both $x_1$ and $x_2$.  {Moreover, given $\ell_0\ge 10$ (to be determined later independently of $n$; see the proof of Proposition  \ref{prop:LLD-bubbles} below), we take $\tilde{\omg}_0$ to be a smooth function which is odd-odd and satisfy $\tilde{\omg}_0 = 0$ on $[0,1]^2\backslash [2^{-\ell_0+2}, 1- 2^{-\ell_0+2}]^2$ and $\tilde{\omg}_0=1$ on $[2^{-\ell_0+3}, 1- 2^{-\ell_0+3}]^2$.} Given some non-negative and decreasing sequence of reals $\{ a_\ell \}_{\ell\ge1}$, we define 
\begin{align}\label{eq:bubbles} 
\omega_{0,n}(x)  
=  
\phi_{2n}(x) + {\tilde{\omg}_0(x)} + \sum_{\ell=\ell_0}^{n} a_\ell \phi_\ell(x),
\end{align} 
% for some bounded sequence of non-negative numbers $\{a_k \}$, 
% 
where 
$\phi_\ell(x) =  \phi_0 (2^\ell x)$.
Note that the supports of $\phi_\ell$ are disjoint.   
{
We remark that $\tilde\omega_0$ is necessary to guarantee \eqref{C}. The term $\sum_\ell a_\ell\phi_\ell$ (``Bourgain-Li'' bubbles after the work \cite{BL}) creates a large rate-of-strain tensor at the origin, which thins the smallest scale vortex blob $\phi_{2n}$.}
\begin{remark}
In the proof of our main result, we take \begin{equation*}
\begin{split}
a_\ell = \ell^{-1/2-\eps}\quad\text{for some small}\quad 0<\eps<1/4.
\end{split}
\end{equation*} With this choice, we see that $\|\omega_{0,n}\|_{H^{1}}  \lesssim 1$ for all $n$. 
\end{remark}
We  mention that  the relation between the vortex thinning process and palinstrophy ($H^1$-norm of the vorticity) has already been studied. 
In 
 \cite[Section 6.3]{AP} (see also \cite{MSMOM}), Ayala-Protas found a initial vorticity of 2D Navier-Stokes equations (with very high Reynolds number)  which attains maximum growth of palinstrophy, by using their own optimizing method.
They figured out that the initial vorticity has odd (in both $x_1$ and $x_2$) type of  symmetry 
with two different scales formation (which seems very similar to our initial vorticity setting). 
%From their argument we know  
% that  palinstrophy 
%is one of the key concept to analyze 2D turbulence. 
%$H^1$-norm of vorticity is nothing more than ``palinstrophy'' which is 
%one of the most important concept in 2D-turbulence.
%This is the reason why we choose the sequence of initial vorticity which is uniformly bounded in $H^1$-norm, to link to the palinstrophy growth.
On the other hand, 
some of mathematicians have showed that 
there is an initial vorticity in $H^1$ such that the value of  
$\|\nabla \omega(t)\|_{L^2}$ (palinstrophy) to the 2D-Euler flow instantaneously blows up.
More precisely, 
 Bourgain-Li \cite{BL} and  Elgindi-Jeong \cite{EJ} constructed  solutions
to the 2D-Euler equations 
which exhibit norm inflation  in  $H^1$ 
(see also 
\cite{MY}). {Now we state our main result.}
\begin{theorem}[Lower bound on the enstrophy dissipation] \label{thm:main2}
Let $\{\omega^\nu_{n}(t)\}_{\nu,n}$ be the unique solution  of the 2D Navier-Stokes equations with viscosity $\nu>0$ and initial data  $\omega_{0,n} $ given in \eqref{eq:bubbles}. Then, there exist a continuous function $\varphi(\nu)$ with $\varphi(\nu)\to 0$ as $\nu\to 0$ and a sequence of viscosity constants $\nu_{n} > 0$ converging to zero, 
such that for any fixed $T>0$,
\begin{equation}\label{zeroth-law-estimate}
\liminf_{n}\, \varphi(\nu_{n})  \frac{1}{T}\int_0^T\int_{\mathbb T^2}|\nabla \omega^{\nu_{n}}_{n}(t,x)|^2dxdt\gtrsim 1.
\end{equation}
\end{theorem}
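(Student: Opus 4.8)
My plan is to rewrite the time–integrated palinstrophy as an enstrophy drop and then to show that the vortex–thinning mechanism forces a \emph{definite} drop coming entirely from the smallest blob $\phi_{2n}$. Pairing \eqref{NS:vort} with $\omega^{\nu}_n$ and integrating in space and time gives the exact balance
\begin{equation*}
\nu\int_0^T\!\!\int_{\mathbb T^2}|\nabla\omega^{\nu}_n|^2\,dx\,dt \;=\; \tfrac12\Big(\|\omega_{0,n}\|_{L^2}^2-\|\omega^{\nu}_n(T)\|_{L^2}^2\Big),
\end{equation*}
so a lower bound on $\frac1T\int_0^T\|\nabla\omega^{\nu}_n\|_{L^2}^2\,dt$ is the same as a lower bound on the dissipated enstrophy divided by $\nu T$. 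Hence it is enough to exhibit, for each fixed $T>0$ and all large $n$, a viscosity $\nu_n$ for which the whole enstrophy $\|\phi_{2n}\|_{L^2}^2\simeq 2^{-4n}$ of the smallest blob is dissipated inside $[0,T]$, i.e. $\|\omega_{0,n}\|_{L^2}^2-\|\omega^{\nu_n}_n(T)\|_{L^2}^2\gtrsim 2^{-4n}$.

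The first input is the rate–of–strain estimate of Proposition~\ref{prop:LLD-bubbles}: the Bourgain–Li bubbles, together with $\tilde\omega_0$, generate a velocity field which near the origin is an approximately linear hyperbolic (saddle) flow whose rate of strain is bounded below by $\Lmb_n:=\sum_{\ell=\ell_0}^n a_\ell\simeq\sum_{\ell=\ell_0}^n \ell^{-1/2-\eps}\simeq n^{1/2-\eps}\to\infty$, each dyadic bubble contributing an $O(a_\ell)$ amount by scale invariance of $\int z_1z_2|z|^{-4}\phi_0\,dz$. The odd–odd symmetry pins the origin as a stagnation point for all $t$, and $\tilde\omega_0$ is what guarantees the structural condition \eqref{C} making this lower bound effective on the region occupied by $\phi_{2n}$ (which, sitting at scale $2^{-2n}$, lies inside every bubble and so feels their full cumulative strain $\Lmb_n$). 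Crucially, the timescale on which I will use the strain is short, of order $\Lmb_n^{-1}\log\Lmb_n\to0$, so the bubble configuration is essentially frozen and the strain stays at level $\gtrsim\Lmb_n$ throughout.

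The core of the argument is then an enhanced–dissipation statement for $\phi_{2n}$. Treating the blob, to leading order, as advected by the background saddle, it is compressed in the contracting direction like $2^{-2n}e^{-\Lmb_n t}$ until it reaches the Batchelor scale $w_n\simeq\sqrt{\nu_n/\Lmb_n}$, where diffusion balances stretching; this occurs at $t_\ast\simeq\Lmb_n^{-1}\log\!\big(2^{-2n}\sqrt{\Lmb_n/\nu_n}\big)$. At scale $w_n$ the blob carries palinstrophy $\simeq 2^{-4n}\Lmb_n/\nu_n$, so its $L^2$ mass is then removed on the \emph{enhanced} timescale $\simeq\Lmb_n^{-1}$ rather than $\nu_n^{-1}$. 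Choosing $\nu_n$ so that the blob genuinely thins ($w_n\ll 2^{-2n}$) while $t_\ast\to0$—for instance $\nu_n=2^{-4n}/\Lmb_n$, giving $t_\ast\simeq\Lmb_n^{-1}\log\Lmb_n\to0$—the thin–and–dissipate process completes well inside $[0,T]$ for large $n$, producing the drop $\gtrsim 2^{-4n}$; since enstrophy is monotone decreasing, the drop survives to any fixed time $T$.

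Finally, with $\nu_n=2^{-4n}/\Lmb_n$ the balance identity yields $\frac1T\int_0^T\|\nabla\omega^{\nu_n}_n\|_{L^2}^2\,dt\gtrsim 2^{-4n}/(\nu_n T)=\Lmb_n/T\to\infty$, and I take $\varphi(\nu_n):=\Lmb_n^{-1/2}$, extended to a continuous $\varphi$ with $\varphi(\nu)\simeq(\log\tfrac1\nu)^{-1/4+\eps/2}$ (using $\log\tfrac1{\nu_n}\simeq 4n\log2$). Then $\varphi(\nu_n)\frac1T\int_0^T\|\nabla\omega^{\nu_n}_n\|_{L^2}^2\,dt\gtrsim\Lmb_n^{1/2}/T\to\infty$, while $\varphi(\nu)\to0$ and $\varphi(\nu)/\nu\to\infty$, i.e. $\varphi$ vanishes strictly slower than $Re^{-1}=\nu$, as claimed. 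The main obstacle is precisely the enhanced–dissipation step: $\omega^{\nu}_n$ is an active, viscous field, not a passive scalar, so I must show that over $[0,t_\ast]$ the blob is genuinely slaved to the saddle of the larger bubbles—controlling its $O(1)$ self–strain (negligible against $\Lmb_n$), its feedback on and the slow drift of the bubbles, and the departure of $u$ from the idealized linear strain on the support of the stretched blob—and that diffusion then empties its $L^2$ mass on the timescale $\Lmb_n^{-1}$. Making this thinning/enhanced–dissipation estimate quantitative for the coupled nonlinear system is where the real work lies.
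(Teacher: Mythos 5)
Your reduction via the enstrophy balance is a genuinely different route from the paper's, but it contains a concrete quantitative gap at its core, not merely a deferred technicality. Your timeline assumes the blob $\phi_{2n}$ is compressed exponentially, like $2^{-2n}e^{-\Lambda_n t}$, i.e.\ that the strain at the origin stays at level $\Lambda_n\approx S_n$ throughout $[0,t_*]$ with $t_*\simeq \Lambda_n^{-1}\log\Lambda_n$, so that the Batchelor scale $\sqrt{\nu_n/\Lambda_n}$ is reached in time $t_*\to 0$. But the bubble construction does not sustain constant strain: the $k$-th bubble retains its shape (hence contributes $\approx a_k$ to the key integral) only on the interval $[0,c_1/S_k]$, and what is actually provable --- this is exactly \eqref{eq:velgrad-large} --- is $\rd_2 u_{n,2}(t,0)\gtrsim t^{-1}$ for $t\gtrsim S_n^{-1}$, with integrated stretching $\int_0^t I \approx \log(S_n t)$, i.e.\ only the algebraic deformation \eqref{eq:LLD-bubbles}, $|\rd_1\eta_{n,1}(t,0)|\gtrsim (S_nt)^{c_0}$. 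Nothing faster holds: since $\nrm{\omega_{0,n}}_{B^0_{\infty,1}}\sim S_n$, well-posedness in $B^0_{\infty,1}$ gives $|D\eta(t,0)|\le C(c_2)$ for $t\le c_2/S_n$ (the paper's sharpness remark), ruling out an $e^{\Lambda_n t}$ compression on your timescale. With $\nu_n=2^{-4n}/\Lambda_n$ the compression factor needed to reach $w_n=2^{-2n}/\Lambda_n$ is $\Lambda_n$ itself, and at the algebraic rate $(S_nt)^{c_0}$ this requires $t\sim S_n^{1/c_0-1}\to\infty$, not $t_*\to 0$. So the claimed enstrophy drop $\gtrsim 2^{-4n}$ inside a fixed $[0,T]$ is unsupported; on top of that, the enhanced-dissipation/slaving estimate for the active viscous field --- which you yourself flag as ``where the real work lies'' --- is the entire content of the proof in your approach, and no viscous analogue of the shape-retention induction is available at your comparatively large $\nu_n$, where the Navier--Stokes solution is no longer shown to be close to Euler.

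The paper avoids all of this by never dissipating the blob: it proves palinstrophy growth for the Euler solution, $\nrm{\nabla\omega_n(t)}_{L^2}\gtrsim (S_nt)^{c_0/2}$ (Proposition \ref{prop:LLD2}), then chooses $\nu_n\sim 2^{-cn}$ so small that the a priori bound $\nrm{\omega_n(t)}_{H^s}\lesssim 2^{c(s)n}$ lets the $L^2$-velocity, $H^1$ and $H^2$ inviscid-limit estimates close, giving $\nrm{\nabla(\omega_n-\omega_n^{\nu_n})}_{L^2}\lesssim 1$ uniformly on $[0,T]$; the Navier--Stokes palinstrophy then inherits the divergent lower bound $S_n^{2c_0}$, and since $S_n$ is a power of $n\approx\log(1/\nu_n)$, the normalization $\varphi(\nu)=(\log(1/\nu))^{-c_0}$ yields \eqref{zeroth-law-estimate}. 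Note that in this regime the physically dissipated enstrophy $\nu_n\int_0^T\nrm{\nabla\omega^{\nu_n}_n}_{L^2}^2\,dt$ remains exponentially small --- the theorem asserts largeness of the $\varphi$-normalized palinstrophy, not an order-$2^{-4n}$ enstrophy loss --- so your energy-balance identity, while correct, reformulates the statement into a strictly stronger claim that the construction is not shown to satisfy.
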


\begin{remark} In the proof,  one sees that the growth of $\nrm{\nabla \omega^{\nu_{n}}_{n}}_{L^2}$ comes from vortex thinning of the smallest scale bubble $\phi_{2n}$ in \eqref{eq:bubbles}. The function $\varphi$ can be chosen as $\varphi(\nu) = (\log \frac{1}{\nu})^{-c_0}$ for some absolute constant $c_0>0$. Finally, note that the sequence of initial data in \eqref{eq:bubbles} is convergent weakly in $H^1$, 
and moreover,
we can obtain the same conclusion for the following initial data:
\begin{equation*}
\omega_{0,n}(x)=\frac{\phi_{2n}(x)}{(S_n\delta)^{\frac{c_0}{4}}}+  {\tilde{\omg}_0(x)} {+}  {\sum_{\ell=\ell_0}^na_\ell\phi_\ell(x),}
\end{equation*}
where $S_n$, $\delta$ and $c_0$ are determined in Proposition \ref{prop:LLD-bubbles}.
In this case the smallest-scale vorticity $\frac{\phi_{2n}(x)}{(S_n\delta)^{\frac{c_0}{4}}}$
vanishes for $n\to\infty$.
\end{remark}

\begin{remark}
Using somewhat similar construction, the authors considered enhanced energy dissipation under the $2+\frac{1}{2}$-dimensional Navier-Stokes flow: small-scale horizontal vortex blob being stretched by large-scale, anti-parallel pairs of vertical vortex tubes \cite{JY}. 
The zeroth law  states that, in the limit of vanishing viscosity, the rate of kinetic energy dissipation for solutions to the 3D incompressible Navier-Stokes equations becomes nonzero.
The authors  proved a version of the zeroth law satisfying the above, which implies in particular enhanced dissipation. 
\end{remark}
 
\subsubsection*{Notation} We write $A \lesssim B$ if there is an absolute constant $c>0$ such that $A\le cB$. Similarly, $A\approx B$ if $A\lesssim B$ and $A\gtrsim B$.

\section{Proof of the main result}

Our result is a consequence of the ``large Lagrangian deformation'' for the 2D Euler equations, which was first established in a work of Bourgain and Li \cite{BL}. However, their idea works only for a short time and relies on a contradiction argument. Even worse, they were not able to obtain any quantitative growth rate. To overcome these difficulties, we employ an induction argument accompanied by a careful scaling control. For additional technical improvements achieved in this paper, see the remarks following the proof of Proposition \ref{prop:LLD-bubbles}. %We systematically summarize the creation of large Lagrangian deformation in the next section.

Let $\eta_{n}=(\eta_{n,1},\eta_{n,2})$ be the associated Lagrangian flow from $\omega_{0,n}$, which is given by \begin{equation*}
\begin{split}
\partial_t \eta_n(t,x)=u_{n}(t,\eta_n(t,x))\quad\text{with}\quad \eta_n(0,x)=x\in\mathbb{T}^2,
\end{split}
\end{equation*} 
where $u_n$ is the corresponding velocity  associated with $\omega_{0,n}$. The proof of our main result is based on the following two propositions.  {The first proposition makes precise what we mean by large Lagrangian deformation.}
\begin{prop}[Creation of large Lagrangian deformation]\label{prop:LLD-bubbles}
	Let $\{a_k\}_{k=1}^\infty$ be a bounded sequence of non-negative reals, and  $\omega_n(t) $ be the solution  {to the 2D Euler equation} with initial data as in \eqref{eq:bubbles} with associated Lagrangian flow $\eta_n$. Set $S_k := S_{k-1} + a_{k}$ for $k \ge 1$ with $S_0 := 1$. Assume that $S_k$ is divergent with $k$. Then, for some absolute constant $c_0 > 0$,  we have  \begin{equation}\label{eq:velgrad-large}
	\begin{split}
	-\frac{\rd u_{n,1}}{\rd x_1}(t,0)=\frac{\rd u_{n,2}}{\rd x_2}(t,0) \gtrsim \frac{1}{t} \mathbf{1}_{\{t\gtrsim S_n^{-1}\}}
	\end{split}
	\end{equation} and  \begin{equation}\label{eq:LLD-bubbles}
	\begin{split}
	\left|\frac{\partial\eta_{n,1}}{\partial x_1}(t,0)\right|=\left|\frac{\partial\eta_{n,2}}{\partial x_2}(t,0)\right|  \gtrsim (S_nt)^{c_0}
	\end{split}
	\end{equation}  for all $t \in [0,\dlt)$ with some absolute constant $\dlt>0$ and $n$ sufficiently large. 
\end{prop}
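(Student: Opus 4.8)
The plan is to deduce the Lagrangian deformation bound \eqref{eq:LLD-bubbles} from the velocity–gradient bound \eqref{eq:velgrad-large}, and to spend the real effort on the latter. Since the odd–odd symmetry of $\omega_{0,n}$ in \eqref{eq:bubbles} is preserved by the 2D Euler flow, the coordinate axes and the four quadrants are invariant, the origin is a stagnation point ($\eta_n(t,0)=0$), and a parity check on $u_n=\nb^\perp\Delta^{-1}\omega_n$ shows that the off-diagonal entries of $\nb u_n(t,0)$ vanish; thus $\nb u_n(t,0)=\mathrm{diag}(-D_n(t),D_n(t))$ is diagonal and trace-free, where I write $D_n(t):=\rd_{x_2}u_{n,2}(t,0)=-\rd_{x_1}u_{n,1}(t,0)$. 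Differentiating the flow map at the fixed point gives the decoupled linear ODEs $\frac{\ud}{\ud t}\rd_{x_i}\eta_{n,i}(t,0)=\rd_{x_i}u_{n,i}(t,0)\,\rd_{x_i}\eta_{n,i}(t,0)$, so $|\rd_{x_2}\eta_{n,2}(t,0)|=\exp(\int_0^t D_n(s)\,\ud s)$ and likewise for the first component. Hence once \eqref{eq:velgrad-large} is known, integrating $D_n(s)\gtrsim 1/s$ over $[c\,S_n^{-1},t]$ produces $\int_0^t D_n\,\ud s\ge c_0\log(S_n t)-C$, and exponentiating yields \eqref{eq:LLD-bubbles}. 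This reduction is the routine part.

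For \eqref{eq:velgrad-large} I would start from the Biot–Savart representation of the rate of strain, $D_n(t)=\frac{4}{\pi}\int_{Q_1}\frac{y_1y_2}{|y|^4}\,\omega_n(t,y)\,\ud y$ up to the sign fixed by the construction, where $Q_1$ is the open first quadrant; the odd–odd symmetry turns the principal-value integral over $\mathbb{T}^2$ into four times this integral, which converges because $\omega_n(t,\cdot)$ vanishes on the coordinate axes. The decisive structural point is \emph{positivity}: since $\omega_n(t,\cdot)$ keeps the fixed sign of $\omega_{0,n}$ throughout $Q_1$, the integrand has one sign, so $D_n(t)$ is a sum of nonnegative scale-by-scale contributions and it suffices to retain the ones we can control. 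Pushing the integral to Lagrangian variables, the contribution of the $\ell$-th bubble is $a_\ell\int \frac{\eta_{n,1}\eta_{n,2}}{|\eta_n|^4}\,\phi(2^\ell x)\,\ud x\ge 0$; and as long as that bubble stays roughly isotropic at its scale $2^{-\ell}$ (aspect ratio $O(1)$ and still located at $|\eta_n|\approx 2^{-\ell}$), the scale invariance of the kernel $y_1y_2/|y|^4$ bounds its contribution below by an absolute constant.

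The core is then a dynamic, scale-localized bookkeeping. A bubble at scale $2^{-\ell}$ feels the ambient strain produced by all \emph{larger} scales and is thinned only after the accumulated strain there reaches $O(1)$; since inner bubbles feel more strain, they are processed first and a thinning front sweeps outward. The self-consistent balance is $D_n(t)\approx S_{\ell_c(t)}\approx 1/t$, where $2^{-\ell_c(t)}$ is the front scale determined by $\int_0^t(\text{strain at scale }2^{-\ell_c(t)})\,\ud s\approx1$: for $t\in[c\,S_n^{-1},\dlt)$ this is exactly the bound $D_n(t)\gtrsim 1/t$, the endpoints corresponding to the front leaving the innermost scale $2^{-2n}$ (at $t\approx S_n^{-1}$, using $S_{2n}\approx S_n$) and reaching an $O(1)$ scale (at $t\approx\dlt\approx S_{\ell_0}^{-1}$, which is absolute). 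To make this rigorous uniformly in $n$ I would run an induction over scales, equivalently over dyadic time intervals, exploiting the exact scaling symmetry $\omega(t,x)\mapsto\omega(t,x/2)$ of 2D Euler—which fixes the time variable and sends $\phi_\ell\mapsto\phi_{\ell-1}$—to transport one scale-independent ``one-bubble processing'' estimate across all scales, thereby replacing Bourgain–Li's non-quantitative short-time contradiction by a quantitative per-step increment of size $\ge c_0$ in $\int D_n$.

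The main obstacle is precisely this flow-map control. The velocity is only log-Lipschitz near the stagnation point, so I cannot simply linearize; I must show, with constants uniform across all dyadic scales and hence in $n$, both that each bubble stays coherent and well-positioned long enough to deliver its full strain increment (the lower bound sustaining $D_n\gtrsim1/t$) and that it is in fact thinned and evacuated on the predicted time scale (so that the front, and the relation $t\approx 1/S_{\ell_c(t)}$, closes). Quantifying the separation of trajectories on each bubble's support under a log-Lipschitz field, and verifying that the errors do not compound across the $\sim n$ scales, is where the careful scaling control is essential: the base case is a quantitative version of the Bourgain–Li hyperbolic-flow estimate, and the induction propagates it inward while the strain from the outer, still-intact bubbles supplies the driving. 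The term $\tld{\omg}_0$ only secures the auxiliary non-degeneracy at the outer scale and contributes a harmless $O(1)$ to $D_n$, while the smallest blob $\phi_{2n}$ likewise contributes $O(1)$ and is the object whose thinning $\rd_{x_2}\eta_{n,2}(t,0)$ ultimately measures.
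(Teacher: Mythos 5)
Your outline shares the paper's skeleton: odd--odd symmetry makes the origin a stagnation point with diagonal, trace-free strain; your quadrant kernel $y_1y_2/|y|^4$ is, in polar coordinates, exactly the weight $\sin(2\theta)/(2r)$ of the Kiselev--\v{S}ver\'ak key integral $I(t,r)$; positivity of the per-scale contributions lets one discard what is no longer controlled; each coherent bubble contributes $\gtrsim a_k$ on a window of length $t_k\approx 1/S_k$; and summing $\sum a_k/S_k \approx \log(S_n t)$ and exponentiating the ODE for $D\eta(t,0)$ gives \eqref{eq:LLD-bubbles} from \eqref{eq:velgrad-large}. One caveat already at this level: on $\mathbb{T}^2$ your strain formula is not exact --- the Key Lemma \eqref{eq:keyLemma} carries a remainder $rB$ with $\nrm{B}_{L^\infty}\le C\nrm{\omega}_{L^\infty}$, and the entire role of $\tilde{\omega}_0$ and the choice of $\ell_0$ is to force $I>2C$ so this remainder can be absorbed; calling it ``auxiliary non-degeneracy contributing a harmless $O(1)$'' understates a step your argument needs to state and use.

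The genuine gap is in the core induction, in two places. First, you propose to transport a single scale-independent ``one-bubble processing'' estimate across scales via the exact scaling symmetry $\omega(t,x)\mapsto\omega(t,\lambda x)$. That symmetry is unavailable here: the domain is the torus, and even locally the data \eqref{eq:bubbles} is not self-similar (the amplitudes $a_\ell$ vary with $\ell$, $\tilde{\omega}_0$ and $\phi_{2n}$ break scale invariance, and the ambient strain felt at scale $2^{-k}$ is $S_k$, which changes with $k$). Uniformity in $k$ must be proved by hand; the paper does it by induction on $k$, where the hypothesis that all larger bubbles $j<k$ retain their shape on $[0,t_j]\supseteq[0,t_k]$ yields $I(t,|\eta|)\lesssim S_k$ along trajectories at radius $\approx 2^{-k}$, whence the polar drift bounds $\left|\frac{d}{dt}\ln\frac{1}{|\eta|}\right|, \left|\frac{d}{dt}\theta(\eta)\right|\lesssim S_k$ preserve the shape on $[0,c_1/S_k]$ with $\epsilon, c_1$ independent of $k$, and the Yudovich estimate \eqref{eq:Yud} supplies the radial trapping that keeps smaller bubbles out of the key integral. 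Second, and more seriously, your scheme requires showing that each bubble ``is in fact thinned and evacuated on the predicted time scale'' so that the thinning front closes. This is both unproven and unnecessary: nothing in your sketch provides a mechanism for such an upper-bound/destruction statement, and it is genuinely harder than anything the proposition needs. The lower bounds \eqref{eq:velgrad-large}--\eqref{eq:LLD-bubbles} require only one-sided persistence: at time $t$ the bubbles with $S_k\lesssim 1/t$ still contribute $\gtrsim a_k$, giving $I(t)\gtrsim 1/t$ once $t\gtrsim S_n^{-1}$, while bubbles past their guaranteed window are simply dropped using $I_k(t)\ge 0$. If you delete the evacuation requirement and replace the scaling-symmetry transport by the direct uniform induction just described, your argument becomes precisely the paper's proof.
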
 

\begin{prop}\label{prop:LLD2}
	Fix $a_{\ell} = \ell^{-\frac{1}{2}-\frac{1}{8}}$ and $S_n:= 1+ \sum_{\ell=1}^n a_\ell$. Let $\omega_{n}(t)$ be the solution of the 2D Euler equation with initial data $\omega_{0,n}$ defined in \eqref{eq:bubbles}. Then we have 
	\begin{equation}\label{small-scale-lower-bound}
	\nrm{\nabla \omega_{n}(t)}_{L^2} \gtrsim (S_nt)^{\frac{c_0}{2}}
	\end{equation} for $t \in [0,\dlt]$ with the same $\dlt>0$ as in Proposition \ref{prop:LLD-bubbles}.
\end{prop}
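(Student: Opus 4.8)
The plan is to exploit that for the 2D Euler equation the vorticity is materially conserved, $\omega_n(t,\eta_n(t,x)) = \omega_{0,n}(x)$, and to convert the large Lagrangian deformation of Proposition~\ref{prop:LLD-bubbles} into palinstrophy growth of the finest bubble $\phi_{2n}$. Differentiating the transport identity in $x$ (with the convention $(\nb\eta_n)_{ij}=\rd_j\eta_{n,i}$) gives $\nb\omega_{0,n}(x) = (\nb\eta_n(t,x))^{T}(\nb\omega_n)(t,\eta_n(t,x))$, hence $(\nb\omega_n)(t,\eta_n(t,x)) = (\nb\eta_n(t,x))^{-T}\nb\omega_{0,n}(x)$. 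Since $\eta_n(t,\cdot)$ is area preserving ($\det\nb\eta_n\equiv1$), the change of variables $y=\eta_n(t,x)$ yields
\begin{equation*}
\nrm{\nb\omega_n(t)}_{L^2}^2 = \int_{\mathbb T^2}\left|(\nb\eta_n(t,x))^{-T}\nb\omega_{0,n}(x)\right|^2\,dx \ge \int_{\supp\phi_{2n}}\left|(\nb\eta_n(t,x))^{-T}\nb\phi_{2n}(x)\right|^2\,dx,
\end{equation*}
where in the last step I restrict to $\supp\phi_{2n}$ and use that the summands in \eqref{eq:bubbles} have disjoint supports, so $\omega_{0,n}=\phi_{2n}$ there.

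Next I would extract the amplification from Proposition~\ref{prop:LLD-bubbles}. By the odd-odd symmetry the velocity gradient at the stagnation point $x=0$ is diagonal, and by incompressibility it is trace free; hence $\nb\eta_n(t,0)$ is diagonal with reciprocal diagonal entries, one of which—the stretching entry $\rd_2\eta_{n,2}(t,0)$—is $\gtrsim (S_nt)^{c_0}$ by \eqref{eq:LLD-bubbles}. Consequently $(\nb\eta_n(t,0))^{-T}$ amplifies the first coordinate by this same large factor. Approximating $\nb\eta_n(t,x)$ on $\supp\phi_{2n}$ by its value at the origin then gives
\begin{equation*}
\int_{\supp\phi_{2n}}\left|(\nb\eta_n(t,x))^{-T}\nb\phi_{2n}(x)\right|^2\,dx \gtrsim (S_nt)^{2c_0}\int_{\mathbb T^2}|\rd_1\phi_{2n}|^2\,dx.
\end{equation*}
The scaling $\phi_{2n}(x)=\phi_0(2^{2n}x)$ gives $\int|\rd_1\phi_{2n}|^2 = \int|\rd_1\phi_0|^2 \aeq 1$, independent of $n$, and taking square roots yields \eqref{small-scale-lower-bound}; the stated exponent $\tfrac{c_0}{2}$ (rather than $c_0$) leaves a margin to absorb the error in this approximation.

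The main obstacle is exactly that last approximation: Proposition~\ref{prop:LLD-bubbles} controls the deformation only at the single point $x=0$, whereas \eqref{small-scale-lower-bound} needs a lower bound on the positive-measure set $\supp\phi_{2n}$. I would argue that the finest bubble sees an essentially constant strain because it lives at scale $2^{-2n}$, far below the scale $2^{-n}$ of the finest strain-generating bubble $\phi_n$; thus on $\supp\phi_{2n}$ the velocity is well approximated by its linearization at the origin throughout $[0,\dlt]$. Making this uniform in $n$ requires controlling both the Lagrangian trajectories emanating from $\supp\phi_{2n}$ and the Hessian $\nb^2\eta_n$ over that set, and it is here that one pays the factor between $c_0$ and $\tfrac{c_0}{2}$. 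The remaining contributions of the coarse bubbles and $\tld{\omg}_0$ only help, since they add nonnegative terms to the palinstrophy.
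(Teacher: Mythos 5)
Your architecture is the same as the paper's: reduce to the finest bubble via disjoint supports and material transport, use the cofactor structure \eqref{eq:rep} so that the stretching entry $\rd_2\eta_{n,2}$ multiplies $\rd_1\phi_{2n}$, and exploit the $H^1$ scale invariance $\nrm{\nb\phi_{2n}}_{L^2}\approx 1$. All of that matches the paper's proof. But the step you yourself flag as ``the main obstacle''---transferring the deformation from the single point $x=0$ to the positive-measure set $\supp\phi_{2n}$, uniformly in $n$---is the entire content of the paper's argument, and the route you sketch for it (controlling the Hessian $\nb^2\eta_n$ and Taylor-expanding $\nb\eta_n(t,x)$ about the origin) would fail. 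The Hessian of the flow map is driven by $\nrm{\nb^2 u_n}_{L^\infty}$, which at the scale of the finest bubble is of size $2^{cn}$ (cf.\ \eqref{Sobolev-embedding}; one derivative of $\omega_{0,n}$ already costs $2^{2n}$), and Gr\"onwall yields $\nrm{\nb^2\eta_n(t)}_{L^\infty}\lesssim 2^{cn}e^{Cnt}$; even after multiplying by the support radius $|x|\sim 2^{-2n}$, the spatial error in $\nb\eta_n$ retains a factor growing exponentially in $n$ for any fixed $t>0$. Meanwhile the main term you must detect is only \emph{polynomial} in $n$: with $a_\ell=\ell^{-5/8}$ one has $S_n\approx n^{3/8}$, so $(S_nt)^{c_0}$ is tiny compared to any $2^{\eps n}$ error. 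Consequently the margin between $c_0$ and $c_0/2$ cannot absorb this approximation error, contrary to what you assert; the halving of the exponent in the paper pays for a \emph{bounded} perturbation, not for a spatial Taylor remainder.

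The missing idea is to linearize $\nb u$ along trajectories rather than $\nb\eta$ in space, which costs half a derivative instead of a full one. The paper first confines the transported bubble: by the Yudovich estimate (Lemma \ref{lem:Yud}), whose constants depend only on $\nrm{\omega_0}_{L^\infty}\le 1$ and not on $n$, the support of $\omega_n^S(t)=\phi_{2n}\circ\eta_n^{-1}(t)$ stays in a ball of radius $2^{-n}$ for $t\in[0,\delta]$. Combining this with the Schauder bound $\nrm{\nb u_n(t)}_{C^{1/2}}\lesssim \nrm{\omega_n(t)}_{C^{1/2}}$ gives $|\nb u_n(t,x)-\nb u_n(t,0)|\lesssim 1$ on that support, and the odd-odd symmetry kills the off-diagonal entries at the origin, so $|\rd_1u_{n,2}|+|\rd_2u_{n,1}|\lesssim 1$ there. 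This $O(1)$ deviation then enters the linear ODE $\frac{d}{dt}\nb\eta(t,x)=\nb u(t,\eta)\,\nb\eta(t,x)$, solved separately at each $x\in\supp\phi_{2n}$, as an additive perturbation of the hyperbolic strain; integrating it against the pointwise lower bound \eqref{eq:velgrad-large} (note: the paper uses \eqref{eq:velgrad-large} here, not \eqref{eq:LLD-bubbles} at the origin as you do) yields $|\rd_2\eta_{n,2}(t,x)|\gtrsim (S_nt)^{c_0/2}$ uniformly over the support, after which your final chain-rule computation closes the proof. Without this confinement-plus-trajectory-linearization mechanism, your proposal is an outline of the statement rather than a proof of it.
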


\begin{remark}
	Although the above propositions are stated in possibly small time interval $[0,\dlt]$, we can take $\dlt>0$ arbitrarily large simply by rescaling the initial data.
More precisely, we just choose $\omega_{0,n}$ to be sufficiently small
in the $L^\infty$-norm (nevertheless $S_n$ is always divergent).
 In the following, we shall assume that the propositions are valid for $[0,T]$ with some $T>0$ given.
\end{remark}

Given the above propositions, we complete the proof of the main theorem. 
\begin{proof}[Proof of Theorem \ref{thm:main2}]
%Since the initial data is already scale-separative, solutions to the Navier-Stokes 
%equations are also scale-separative. This means, each solution $\{\omega^\nu_n\}$ 
%exists globally-in-time. (probably to do this we need to define precisely what scale-separative means! it is not clear to me. I suggest just writing something like:)
Since initial data is always smooth,
we note that for each $\nu > 0$, the unique solution $\{\omega^\nu_{n }\}$ 
exists globally-in-time. From now on the multiplicative constants in the estimates will depend implicitly on $T>0$.
 From the maximum principle $\nrm{\omega_{n}(t)}_{L^\infty} = 1$, we have 
\begin{equation*}
\begin{split}
\nrm{\nabla u_n(t)}_{L^\infty} \lesssim \log \nrm{\omega_{0,n}}_{C^1} e^{Ct} \lesssim n 
\end{split}
\end{equation*} on the time interval $[0,T]$. 
Note that $\|\omega_{0,k}^S\|_{C^1}$ can be always controlled by taking sufficiently large $k$ (depending on $n$).
Then we have, from the classical estimate
(see \cite{BKM} for example) 
\begin{equation*}
\begin{split}
\nrm{\omega_n(t)}_{H^s} \lesssim \nrm{\omega_{0,n}(t)}_{H^s} e^{C(s)\int_0^t \nrm{\nabla u_n(\tau)}_{L^\infty} d\tau} 
\end{split}
\end{equation*} that \begin{equation*}
\begin{split}
\nrm{\omega_n(t)}_{H^s} \lesssim 2^{c(s)n}  
\end{split}
\end{equation*} for some constant $c(s) > 0$ depending only on $s > 1$ for any $s$ and $t \in [0,T]$. 
We note that the Navier-Stokes solutions satisfy the same bounds: \begin{equation*}
\begin{split}
\nrm{\omega_n^{\nu}(t)}_{H^s} \lesssim 2^{c(s)n}  
\end{split}
\end{equation*} with constant independent of $\nu > 0$. This is because we still have the maximum principle $\nrm{\omega^{\nu}_{n}(t)}_{L^\infty} \le 1$ for all $t \ge 0$ and the $H^s$ estimate holds \textit{a fortiori} for the Navier-Stokes. Taking $s>3$ and by the Sobolev embedding, we obtain
\begin{equation}\label{Sobolev-embedding}
\nrm{\nabla^2\omega_n(t)}_{L^\infty}+\nrm{\nabla^2\omega_n^{\nu}(t)}_{L^\infty} \lesssim 2^{cn}
\quad\text{for}\quad t\in[0,T]. 
\end{equation} 
This is elementary but is the key in the estimates below.
 
\subsection{$L^2$ inviscid limit estimate on the  velocity}

We compare the 2D Euler and Navier-Stokes equations  of the velocity:
  \begin{equation*}
\begin{split}
&\rd_t u_n^{\nu} + u_n^{\nu} \cdot \nabla u_n^{\nu} + \nabla p_n^{\nu} = \nu \lap u_n^{\nu} , \\
&\rd_t u_n + u_n\cdot\nabla u_n + \nabla p_n = 0. 
\end{split}
\end{equation*} 
By
\begin{equation*}
 \int (u^\nu\cdot\nabla)( u_n^\nu - u_n) \cdot (u_n^{\nu} - u_n)=0,  
\end{equation*}
then we see that 
 \begin{equation*}
\begin{split}
\frac{1}{2}\frac{d}{dt} \nrm{u_n^{\nu} - u_n}_{L^2}^2 + \int( u_n - u_n^{\nu}) \cdot \nabla u_n \cdot (u_n^{\nu} - u_n)  
= \nu \int \lap u_n^{\nu} \cdot (u_n^{\nu} - u_n).
\end{split}
\end{equation*} We handle the right hand side as follows: \begin{equation*}
\begin{split}
-\nu\int |\nabla u_n^{\nu}|^2 + \nu \int \nabla u_n^{\nu} : \nabla u_n.
\end{split}
\end{equation*} Then, using the  Cauchy-Schwarz inequality and Young's inequality, we have \begin{equation*}
\begin{split}
\frac{d}{dt} \nrm{u_n^{\nu} - u_n}_{L^2}^2  \lesssim \nrm{\nabla u_n}_{L^\infty} \nrm{u_n^{\nu} - u_n}_{L^2}^2 + \nu \nrm{\nabla u_n}_{L^2}^2 \lesssim n\nrm{u_n^{\nu} - u_n}_{L^2}^2 + \nu ,
\end{split}
\end{equation*} which gives \begin{equation*}
\begin{split}
\nrm{u_n^{\nu} - u_n}_{L^2}^2 \lesssim  \nu 2^{cn}
\end{split}
\end{equation*} for $t \in [0,T]$. 
 
\subsection{$H^1$ inviscid limit estimate }

This time, we consider the 2D Navier-Stokes solutions in the vorticity form and compare it with the corresponding Euler solutions:
\begin{equation*}
\begin{split}
&\rd_t\omega_n^{\nu} + u_n^{\nu}\cdot\nabla \omega_n^{\nu} = \nu\lap \omega_n^{\nu},  \\
&\rd_t\omega_n + u_n\cdot\nabla \omega_n = 0 . 
\end{split}
\end{equation*} Then using previous bounds, \begin{equation*}
\begin{split}
\frac{d}{dt} \nrm{\omega_n - \omega_n^{\nu}}_{L^2}^2 &\lesssim \nrm{\nabla \omega_n}_{L^\infty} \nrm{u_n - u_n^{\nu}}_{L^2} \nrm{\omega_n - \omega_n^{\nu}}_{L^2}   + \nu \nrm{\nabla\omega_n}_{L^2}^2 \\
&\lesssim\nu 2^{cn} \nrm{\omega_n - \omega_n^{\nu}}_{L^2} +\nu 2^{cn}.
\end{split}
\end{equation*}
Thus we have 
%Here we choose $\nu\sim 2^{-cn}$, this guarantees that 
\begin{equation*}
\begin{split}
%\nrm{u_n^L - u_n^{L,\nu}}_{L^2} \lesssim \nrm{\omega_n^L - \omega_n^{L,\nu}}_{L^2} \lesssim 2^{-Mn}.
\nrm{\nabla u_n -\nabla u_n^{\nu}}_{L^2} \lesssim \nrm{\omega_n - \omega_n^{\nu}}_{L^2} \lesssim \nu2^{cn}.
\end{split}
\end{equation*} 

\subsection{$H^2$ inviscid limit estimate  }
Let $\partial$ be a spatial derivative.
Here, we consider the 2D Navier-Stokes solutions in the vorticity-gradient form and compare it with the corresponding Euler solutions:
\begin{equation*}
\begin{split}
&\rd_t\partial\omega_n^{\nu} + (\partial u_n^{\nu}\cdot\nabla) \omega_n^{\nu}+  (u_n^{\nu}\cdot\nabla)\partial \omega_n^{\nu} = \nu\lap \partial\omega_n^{\nu},  \\
&\rd_t\partial\omega_n + (\partial u_n\cdot\nabla) \omega_n+ (u_n\cdot\nabla)\partial \omega_n = 0 . 
\end{split}
\end{equation*} Again using previous bounds, \begin{equation*}
\begin{split}
\frac{d}{dt} \nrm{\partial(\omega_n - \omega_n^{\nu})}_{L^2}^2 &\lesssim \nrm{\nabla\partial\omega_n}_{L^\infty} \nrm{u_n- u_n^{\nu}}_{L^2} \nrm{\partial(\omega_n- \omega_n^{\nu})}_{L^2} \\
&
+\|\nabla\omega_n\|_{L^\infty}\|\partial(u_n-u_n^\nu)\|_{L^2}\|\omega_n^\nu-\omega_n\|_{L^2}
  + \nu \nrm{\omega_n}_{\dot H^{3/2}}^2 \\
&\lesssim\nu 2^{cn} \nrm{\nabla(\omega_n - \omega_n^{\nu})}_{L^2} +\nu 2^{cn}.
\end{split}
\end{equation*}
Here we choose  $\nu\sim 2^{-cn}$, this guarantees that \begin{equation*}
\begin{split}
%\nrm{u_n^L - u_n^{L,\nu}}_{L^2} \lesssim \nrm{\omega_n^L - \omega_n^{L,\nu}}_{L^2} \lesssim 2^{-Mn}.
 \nrm{\nabla(\omega_n - \omega_n^{\nu})}_{L^2} \lesssim 1.
\end{split}
\end{equation*}

\subsection{Completion of the proof}
Recall \eqref{small-scale-lower-bound}  {from Proposition \ref{prop:LLD-bubbles}}:\begin{equation*}
\begin{split}
\nrm{\nabla\omega_n(t)}_{L^2} \gtrsim (S_nt)^{c_0}
\end{split}
\end{equation*} for $t \in [0,T]$.
%we achieve $\varphi(\nu) = \nu^a$ with some small $a > 0$. 
Then
\begin{equation*}
\begin{split}
\frac{1}{T}\int_0^T\|\nabla\omega_n(t)\|_{L^2}^2dt  \gtrsim_T S_n^{2c_0}.
%\nrm{\omega_{n,0}^S}_{L^2}^2\int_0^\delta e^{2cnt} dt 
\end{split}
\end{equation*} 
% (using Theorem \ref{thm:main} and Proposition \ref{BC}),
Then we finally have 
\begin{equation*}
\begin{split}
&\ \varphi(\nu_n)\frac{1}{T}\int_0^T\|\nabla\omega_n^\nu(t)\|_{L^2}^2dt \\
&\gtrsim 
\varphi(\nu_n)\frac{1}{T}\int_0^T\|\nabla\omega_n(t)\|_{L^2}^2dt
-\varphi(\nu_n) \sup_{0<t<T}\|\nabla(\omega_n^\nu(t)-\omega_n(t))\|_{L^2}^2\\
&
\gtrsim_T
\varphi(\nu_n)S_n^{2c_0}-\varphi(\nu_n).
\end{split}
\end{equation*}
Here we set $\varphi(\nu_n)\sim(-\log(\nu_n))^{-c_0}$,
then we have the desired estimate.
\end{proof}

\section{Creation of Lagrangian deformation}\label{LD near the origin}

In this section, we give the proofs of two propositions. % { }

\begin{proof}[Proof of Proposition \ref{prop:LLD-bubbles}]

Recall that the 2D flow map associated with the initial vorticity $\omega_{0,n}$ is denoted by $\eta_n=(\eta_{n,1}, \eta_{n,2})$ and it satisfies
\begin{equation*}
\begin{split}
\partial_t \eta_n(t,x)=u_n(t,\eta_n(t,x))\quad\text{with}\quad \eta_n(0,x)=x\in\mathbb{T}^2,
\end{split}
\end{equation*} 
where $u_n$ is the corresponding velocity associated with $\omega_{0,n}$.
Then we have the following representation 
\begin{equation}\label{eq:rep}
D\eta_n:=
\begin{pmatrix}
\partial_1\eta_{n,1}&\partial_2\eta_{n,1}\\
\partial_1\eta_{n,2}&\partial_2\eta_{n,2}\\
\end{pmatrix}, \quad 
D\eta_n^{-1}=
\begin{pmatrix}
\partial_2\eta_{n,2}&-\partial_2\eta_{n,1}\\
-\partial_1\eta_{n,2}&\partial_1\eta_{n,1}\\
\end{pmatrix}.
\end{equation}
 Note that we always have $\nrm{D\eta_n(t)}_{L^\infty} = \nrm{D\eta_n^{-1}(t)}_{L^\infty}$.
%Eventually we have the following explicit formula: 
%\begin{equation}\label{scale-separation-expression}
%	\begin{split}
%	\omega_n^L(t,\eta_n(t,x)) = \omega_{0,n}^L(x)\quad\text{and}\quad \omega_n^S(t,\eta_n(t,x)) = \omega_{0,n}^S(x).
%	\end{split}
%	\end{equation}
	To begin with, we shall fix some $n$ large and write $\eta = \eta_n$ for simplicity. Moreover, we note that  $S_n$ is divergent in $n$.
% since otherwise \eqref{eq:LLD-bubbles} trivially holds for $t \in [0,1]$ as $|D\eta(t,0)| \ge 1$ always. 
We systematically use the following ``Key Lemma'' due to Kiselev and Sverak \cite{KS} (a version on $\mathbb{T}^2$ has been established by \cite{Z}). The version presented below written in the polar coordinates is given in \cite[Lemma 5.1]{EJSVP}.  
 
 \medskip 
 
\noindent \textbf{Key Lemma}. \textit{ Assume the vorticity on $\mathbb{T}^2$ is bounded and odd with respect to both axis. Then $u = \nabla^\perp\Delta^{-1}\omega$ satisfies \begin{equation}\label{eq:keyLemma}
	\begin{split}
	u(t,r,\theta) = \begin{pmatrix}
	\cos\theta \\
	-\sin\theta
	\end{pmatrix} r I(t,r) + r B(t,r,\theta)
	\end{split}
	\end{equation} for $|r| \le 1/2$, where \begin{equation*}
	\begin{split}
	I(t,r) := \frac{4}{\pi}\int_0^{\pi/2}\int_{  2r}^{1 } \frac{\sin(2\theta')}{s} \omega(t,s,\theta') dsd\theta'
	\end{split}
	\end{equation*} and \begin{equation}\label{absolute constant}
	\begin{split}
	\nrm{B(t)}_{L^\infty} \le C \nrm{\omega(t)}_{L^\infty}
	\end{split}
	\end{equation} for some absolute constant $C > 0$. }

	\medskip

	\noindent For simplicity, we shall set $I(t) := I(t,0)$ which is well-defined as the vorticities we consider is always vanishing in a small neighborhood of the origin. We now give a brief outline of the argument. The goal is to estimate the time integration of the ``key integral'' $I(t)$  {for some time interval $[0,t^*]$,} since then we deduce from the ODE \begin{equation*}
	\begin{split}
	\frac{d}{dt} \rd_2\eta_{n,2}(t,0) = \rd_2 u_{n,2}(t,0) \rd_2\eta_{n,2}(t,0),\quad \rd_2\eta_{n,2}(0,0)=1
	\end{split}
	\end{equation*} and \begin{equation*}
	\begin{split}
	\rd_2 u_{n,2}(t,0) \ge I(t,0)-C
	\end{split}
	\end{equation*} that (assuming $I(t,0)>2C$) \begin{equation*}
	\begin{split}
	|\rd_1\eta_{n,1}(t^*,0)| \ge\exp\left(\frac{1}{2} \int_0^{t^*} I(t) dt \right) .
	\end{split}
	\end{equation*}  {The assumption 
\begin{equation}\label{C}
I(t,0)>2C
\end{equation}
is easily guaranteed by taking $\ell_0$ large, since the contribution to $I$ from $\tilde{\omg}_0$ diverges as $\ell_{{0}}\rightarrow+\infty$ (see \cite{KS}).}
In turn, we may write \begin{equation*}
	\begin{split}
	I(t) = \sum_{ k = 1}^n I_k(t)+I_{2n}(t)
	\end{split}
	\end{equation*} where
\begin{equation*}
	\begin{split}
	I_{2n} (t) := \frac{4}{\pi} \int_0^{\pi/2} \int_0^{1/2} \frac{\sin(2\theta)}{r}  \phi_{ {2n}}(\eta^{-1}(t))(r,\theta) drd\theta 
	\end{split}
	\end{equation*} 
and
 \begin{equation*}
	\begin{split}
	I_k (t) := \frac{4}{\pi} \int_0^{\pi/2} \int_0^{1/2} \frac{\sin(2\theta)}{r} a_k \phi_k(\eta^{-1}(t))(r,\theta) drd\theta 
	\end{split}
	\end{equation*} are the contributions to $I(t)$ from the bubble initially located at the scales $2^{-2n}$ and $2^{-k}$ respectively. 
	Our strategy is to establish the following assertion  inductively in $k$: The  ``shape'' of the $k$-th bubble essentially remains the same within the time scale $t_k := c_1/S_k$ for some absolute constant $c_1 > 0$. Here what we mean by shape will be made precise {later};  {for now we just mention that as a consequence, it follows that  $I_k(t)\ge cI_k(0)$} {for $t\in [0,t_k]$,}  {where $c>0$ is a universal constant.} Assuming for a moment that this statement holds,  we obtain that \begin{equation*}
	\begin{split}
	\int_0^{t_k} I_k(t) dt \gtrsim t_k I_k(0) \gtrsim \frac{a_k}{S_k}.
	\end{split}
	\end{equation*}  Then we have
(since $I_{2n}(t)$ is non-negative, we just drop it off)
  \begin{equation*}
	\begin{split}
	\int_0^{t^*} I(t) dt \ge \sum_{k=1}^{n}\int_0^{\min\{t_k,t^*\}} I_k(t) dt \gtrsim \sum_{1\le k \le n, c_1 < t^*S_k} \frac{a_k}{S_k} 
	\end{split}
	\end{equation*} owing to the non-negativity of each $I_k(t)$. We now observe that, by approximating the sum with a Riemann integral, \begin{equation*}
	\begin{split}
	\sum_{k=1}^n \frac{a_k}{S_k} \approx \log(S_n) 
	\end{split}
	\end{equation*} and taking $k^*$ be the smallest number satisfying \begin{equation*}
	\begin{split}
	S_{k^*}t^* > c_1,
	\end{split}
	\end{equation*} (which exists by taking $n$ larger if necessary, since $t^* > 0$ and we are assuming that the sequence $S_{k^*}$ is divergent for $k^*\to\infty$) \begin{equation*}
	\begin{split}
	\sum_{k=1}^{k^*} \frac{a_k}{S_k} \approx \log(S_{k^*}) \approx \log(\frac{c_1}{t^*}). 
	\end{split}
	\end{equation*} This gives \eqref{eq:LLD-bubbles} with some positive constant $c_0$. Similarly, we obtain that as long as $t\gtrsim S_n$, \begin{equation*}
		\begin{split}
		\rd_2 u_{n,2}(t,0) \gtrsim t^{-1},
		\end{split}
		\end{equation*} which is simply \eqref{eq:velgrad-large}. Hence it is sufficient to prove that \begin{equation*}
	\begin{split}
	I_k(t) \gtrsim I_k(0),\qquad t \in [0,t_k]
	\end{split}
	\end{equation*} uniformly in $k$. Below we shall formulate and prove a claim which implies the above lower bound.

	\medskip
	
	\noindent \textit{Step I: some preparations}
	
	\medskip

	\noindent We make some simple observations regarding the evolution of the bubbles. Recall from the definition of $\phi_0$ that restricted on to the positive quadrant, there exist ``rectangles'' $$\overline{R}_0 = \left\{ (r,\theta) : \overline{r}_1 < r < \overline{r}_2, \overline{\theta}_1 < \theta < \overline{\theta}_2 \right\}$$ and $$\underline{R}_0= \left\{ (r,\theta) : \underline{r}_1 < r < \underline{r}_2, \underline{\theta}_1 < \theta < \underline{\theta}_2 \right\}$$ such that \begin{equation*}
	\begin{split}
	\phi_0 = 1 \mbox{ on }  \underline{R}_0 \quad \mbox{ and } \quad \phi_0 = 0  \mbox{ outside of }  \overline{R}_0. 
	\end{split}
	\end{equation*} We may set \begin{equation*}
	\begin{split}
	\frac{1}{2} < \overline{r}_1 < \underline{r}_1 < \underline{r}_2 < \overline{r}_2 < 2 
	\end{split}
	\end{equation*} and \begin{equation*}
	\begin{split}
	\frac{\pi}{6} < \overline{\theta}_1 < \underline{\theta}_1 < \underline{\theta}_2 < \overline{\theta}_2 < \frac{\pi}{3} . 
	\end{split}
	\end{equation*} Now by simple scaling, with the $2^{-k}$-scaled rectangles $\underline{R}_k$ and $\overline{R}_k$, we have \begin{equation*}
	\begin{split}
	\phi_k = 1 \mbox{ on }  \underline{R}_k \quad \mbox{ and } \quad \phi_k = 0  \mbox{ outside of }  \overline{R}_k, 
	\end{split}
	\end{equation*} still restricted on the first quadrant (more precisely on $[0,1]^2$). This time, take an even smaller rectangle: \begin{equation*}
	\begin{split}
	\underline{R}_0^* = \left\{ (r,\theta) : \underline{r}_1^* < r < \underline{r}_2^*, \underline{\theta}_1^* < \theta < \underline{\theta}_2^* \right\} \subset \underline{R}_0
	\end{split}
	\end{equation*} where we may set \begin{equation*}
	\begin{split}
	\underline{r}_1^* = \frac{2\underline{r}_1 + \underline{r}_2}{3}, \quad \underline{r}_2^* = \frac{ \underline{r}_1 + 2\underline{r}_2}{3}
	\end{split}
	\end{equation*} and similarly \begin{equation*}
	\begin{split}
	\underline{\theta}_1^* = \frac{2\underline{\theta}_1 + \underline{\theta}_2}{3}, \quad \underline{\theta}_2^* = \frac{ \underline{\theta}_1 + 2\underline{\theta}_2}{3}. 
	\end{split}
	\end{equation*} Then as before define \begin{equation*}
	\begin{split}
	\underline{R}_k^* := \left\{ (r,\theta) : \underline{r}_1^* < 2^kr < \underline{r}_2^*, \underline{\theta}_1^* < \theta < \underline{\theta}_2^* \right\}. 
	\end{split}
	\end{equation*} Moreover, define \begin{equation*}
	\begin{split}
	\overline{A}_k := \{ (r,\theta) : 2^{-k-1} < r < 2^{1-k} , 0 < \theta < \frac{\pi}{2}  \}. 
	\end{split}
	\end{equation*} We shall now prove the following
%}
	
	\smallskip
	
	\noindent \textbf{Claim.}  In the time interval $[0,t_k]$, the $k$-th bubble remains $a_k$ on the rectangle $\underline{R}_k^*$ and vanishes outside $\overline{A}_k$. Here $t_k := c_1/S_k$ with $c_1 > 0$ independent of $k$.
	
	\smallskip	
	
	\noindent This is what we mean by retaining the same ``shape''.
	We now rewrite the evolution of the trajectories in polar coordinates, using \eqref{eq:keyLemma}. Given some $x \in [0,1]^2$, we shall express the point $\eta(t,x)$ using $|\eta|$ and $\theta(\eta)$. Then, \begin{equation}\label{eq:traj-polar-rad}
	\begin{split}
	\frac{d}{dt}|\eta| & = u(t,\eta) \cdot \begin{pmatrix}
	\cos(\theta(\eta)) \\
	\sin(\theta(\eta)) 
	\end{pmatrix} \\
	& = |\eta| \left( \cos(2\theta(\eta)) I(t,|\eta|) + (\cos(\theta( \eta))B_1 + \sin(\theta(\eta))B_2) \right) 
	\end{split}
	\end{equation} and \begin{equation}\label{eq:traj-polar-angle}
	\begin{split}
	\frac{d}{dt}\theta(\eta) & = \frac{u(t,\eta)}{|\eta|} \cdot \begin{pmatrix}
	-\sin(\theta(\eta)) \\
	\cos(\theta(\eta)) 
	\end{pmatrix} \\
	& = -\sin(2\theta(\eta))  I(t,|\eta|) +  (-\sin(\theta( \eta))B_1 + \cos(\theta(\eta))B_2)
	\end{split}
	\end{equation} where $B = (B_1, B_2)$ is from \eqref{eq:keyLemma}.
% }

	\medskip
	
	\noindent \textit{Step II: induction base case $k = 1$}
	
	\medskip

	\noindent To proceed, we recall a simple estimate of Yudovich (see e.g. \cite{EJ} for a proof): \begin{lemma}\label{lem:Yud}
	Let $\omega(t) \in L^\infty([0,\infty): L^\infty(\mathbb{T}^2))$ be a solution of the 2D Euler equations on $\mathbb{T}^2$, and $\eta$ be the associated flow map. Then for some absolute constant $c > 0$, we have \begin{equation}\label{eq:Yud}
	\begin{split}
	|x-x'|^{1+ct\nrm{\omega_0}_{  L^\infty}} \le |\eta(t,x) - \eta(t,x')| \le |x-x'|^{1-ct\nrm{\omega_0}_{  L^\infty}},
	\end{split}
	\end{equation} for all $0 \le t \le 1$ and $|x-x'| \le 1/2$.
\end{lemma}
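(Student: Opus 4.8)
The plan is to derive \eqref{eq:Yud} from the classical log-Lipschitz regularity of the 2D Euler velocity field together with a Gr\"onwall (Osgood) argument applied to the separation of two Lagrangian trajectories. The point is that although $u$ is not Lipschitz, its modulus of continuity is only barely super-linear, namely of the form $z(1+\log_+(1/z))$ with $\log_+ := \max\{0,\log\}$, and the associated scalar differential inequality can be integrated essentially explicitly after a logarithmic change of variables.

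I would proceed in four steps. First, record the log-Lipschitz bound: since $u(t,\cdot)=\nb^\perp\lap^{-1}\omega(t,\cdot)$ with $\omega(t)\in L^\infty(\mathbb{T}^2)$, splitting the Biot--Savart kernel into a near-field and a far-field part gives an absolute constant $C>0$ with
\[ |u(t,x)-u(t,x')| \le C\nrm{\omega(t)}_{L^\infty}\, |x-x'|\,(1 + \log_+(1/|x-x'|)), \qquad x,x'\in\mathbb{T}^2. \]
Second, because the vorticity is transported by the divergence-free Euler flow, the maximum principle gives $\nrm{\omega(t)}_{L^\infty}=\nrm{\omega_0}_{L^\infty}$ for all $t$, so the constant on the right is time-independent. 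Third, fix $x,x'$ with $|x-x'|\le 1/2$ and set $z(t):=|\eta(t,x)-\eta(t,x')|$. Differentiating and using $\partial_t\eta = u(t,\eta)$ together with the bound above yields
\[ \Big|\tfrac{d}{dt} z(t)\Big| \le |u(t,\eta(t,x)) - u(t,\eta(t,x'))| \le C\nrm{\omega_0}_{L^\infty}\, z(t)\,(1 + \log_+(1/z(t))). \]
Fourth, substitute $g(t):=\log(1/z(t))$, which converts the inequality (while $z(t)\le 1$) into the linear form $|\dot g(t)| \le C\nrm{\omega_0}_{L^\infty}(1+g(t))$; Gr\"onwall then sandwiches $1+g(t)$ between $(1+g(0))e^{\pm C\nrm{\omega_0}_{L^\infty} t}$. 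Exponentiating recovers $z(t)$ in terms of $z(0)=|x-x'|$, and converting the exponential factors to the linear exponents $1\pm ct\nrm{\omega_0}_{L^\infty}$ in \eqref{eq:Yud} via $e^{-s}\ge 1-s$ and $e^{s}\le 1+cs$ finishes the argument.

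The main technical obstacle is the log-Lipschitz estimate itself, i.e. controlling the logarithmically singular Biot--Savart kernel uniformly in $\nrm{\omega}_{L^\infty}$, and, relatedly, ensuring the trajectory separation $z(t)$ never leaves the regime $z\le 1/2$ (equivalently $g\ge \log 2>0$) in which that modulus is the operative one; this self-consistency is precisely what the lower bound on $g$ supplies a priori. A secondary point is that passing from the exponential H\"older exponent $e^{\pm Ct\nrm{\omega_0}_{L^\infty}}$ to the linear form $1\pm ct\nrm{\omega_0}_{L^\infty}$ is legitimate only while $t\nrm{\omega_0}_{L^\infty}$ stays bounded; here this is automatic, since $\nrm{\omega_0}_{L^\infty}\le 1$ by construction and $t\le 1$. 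As all of these ingredients are standard (cf. \cite{EJ, BL}), I would keep the write-up of this lemma brief.
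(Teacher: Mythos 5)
Your proof is correct and follows essentially the same route as the paper, which does not prove the lemma itself but cites \cite{EJ}, where precisely this standard chain (log-Lipschitz bound for the Biot--Savart velocity, conservation of $\nrm{\omega(t)}_{L^\infty}$, and Gr\"onwall applied to $g=\log(1/z)$ along the trajectory separation) is carried out. Your closing caveat is also apt: the linear exponents $1\pm ct\nrm{\omega_0}_{L^\infty}$ follow from the exponential ones $e^{\pm Ct\nrm{\omega_0}_{L^\infty}}$ only when $t\nrm{\omega_0}_{L^\infty}$ stays bounded, which holds in the paper's application since $\nrm{\omega_{0,n}}_{L^\infty}=1$ and $t\le 1$, even though the lemma as stated leaves this normalization implicit.
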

%\begin{remark}
%The main difficulty of the claim is to ensure the existence of the 
%time sequence$\{t_k\}_k$ which is heavily depending on $\{a_k\}_k$.
%We can expect that only using \eqref{eq:Yud} is not enough to guarantee the existence of $\{t_k\}_k$, since \eqref{eq:Yud}  is nothing to do with $\{a_k\}_k$.
%Thus we need some more sophisticated idea.
%\end{remark}

%	\textcolor{red}{
	We shall be concerned with the bubble $\phi_1$ and the trajectories $\eta(t,x)$ where $x \in \supp(\phi_1)$. Using the Yudovich estimate \eqref{eq:Yud} with $x' = 0$, we see that such trajectories are trapped inside the region $\{ 2^{-2} \le r \}$ during $[0,t_1]$ by choosing $c_1 > 0$ depending only on  $c\nrm{\omega_0}_{L^\infty}$ in \eqref{eq:Yud}. Similarly, trajectories starting from $\cup_{k>1}\supp(\phi_k)$ cannot cross the circle $\{ r = 2^{-2}\}$. This results in a naive bound \begin{equation*}
	\begin{split}
	I_1(t,|\eta|) \le I_1(t,2^{-2}) \lesssim 
a_12^{-2}\|\phi_1(\eta^{-1}(\cdot))\|_{L^1}\lesssim
a_1 
	\end{split}
	\end{equation*} on the same time interval, 
where
\begin{equation}\label{key-integral}
	\begin{split}
	I_k (t,r) := \frac{4}{\pi} \int_0^{\pi/2} \int_{2r}^{1/2} \frac{\sin(2\theta)}{s} a_k \phi_k(\eta^{-1}(t))(s,\theta) dsd\theta. 
	\end{split}
\end{equation}
 We use this to obtain slightly improved estimates on $|\eta|$ in \eqref{eq:traj-polar-rad}: \begin{equation*}
	\begin{split}
	\left| \frac{d}{dt} \ln \frac{1}{|\eta|} \right| \lesssim S_1  
	\end{split}
	\end{equation*} using $|B| \lesssim 1$. This guarantees that, given any small $\epsilon > 0$, by taking $c_1 = c_1(\epsilon) > 0$  small enough if necessary, we have \begin{equation*} 
	\begin{split}
	\left|\ln \frac{|\eta(0,x)|}{|\eta(t,x)|}\right| < \epsilon, \quad t\in [0,t_1],
	\end{split}
	\end{equation*}  recalling that $t_1 = c_1/S_1$. For the angle, we simply use \begin{equation*}
	\begin{split}
	\left| \frac{d}{dt} \theta(\eta)  \right| \lesssim S_1 
	\end{split}
	\end{equation*} to deduce \begin{equation*}
	\begin{split}
	\left| \theta(\eta(t,x))  - \theta(\eta(0,x)) \right| < \epsilon
	\end{split}
	\end{equation*} again for $t\in [0,t_1]$ by taking $c_1 > 0$ smaller if necessary. Thus, a suitable choice of $\epsilon > 0 $ (depending only on $\underline{r}_{1}, \underline{r}_{2}, \underline{\theta}_1, \underline{\theta}_2$) finishes the proof of the \textbf{Claim} for the case $k = 1$.
% }
		
	\medskip
	
	\noindent \textit{Step III: completing the induction}
	
	\medskip
	
	\noindent We now assume that for some $k_0 > 1$ the \textbf{Claim} has been proved for $k = 1, \cdots, k_0-1$. We are now concerned with the trajectories $\eta(t,x)$ where $t \le t_{k_0}$ and $x \in \supp(\phi_{k_0})$. The induction hypothesis guarantees that, as long as $2^{-(k_0 + 1)} < |\eta| < 2^{-(k_0 -1)}$,
%  {(by Lemma \ref{lem:Yud})}, 
we have that \begin{equation*}
	\begin{split}
	\left| \frac{d}{dt} \ln \frac{1}{|\eta|} \right| \lesssim S_{k_0} 
	\end{split}
	\end{equation*}  {simply because $t_k$ is decreasing with $k$ and the hypothesis ensures that the contribution of $a_k\phi_k \circ \eta^{-1}(t)$ to the key integral 
\eqref{key-integral}
is bounded by $c a_k$ with some $c$ independent of $k$, for $k = 1, \cdots, k_0 - 1$. 
More precisely
\begin{equation*}
I_k(t,|\eta|)\lesssim a_k2^{-2k}\|\phi_k(\eta^{-1}(\cdot))\|_{L^1}\lesssim a_k.
\end{equation*}
Strictly saying, here, we use the even smaller rectangles
$\underline{R}^*_k$. Thus $c$ is depending on $\epsilon>0$. 
This implies that 
 \begin{equation*} 
	\begin{split}
	\left|\ln \frac{|\eta(0,x)|}{|\eta(t,x)|}\right| < \epsilon , \quad t\in [0,t_{k_0}]
	\end{split}
	\end{equation*} for the \textit{same} $\epsilon$ and $c_1$.} Similarly, we can deduce \begin{equation*}
	\begin{split}
	\left| \theta(\eta(t,x))  - \theta(\eta(0,x)) \right| < \epsilon
	\end{split}
	\end{equation*} on $[0,t_{k_0}]$. The proof of \textbf{Claim} is complete, which finishes the proof. 
\end{proof}
  
\begin{remark} A few remarks are in order.
	\begin{itemize}
		\item \textit{Large Lagrangian deformation occurs at the origin}. Proposition \ref{prop:LLD-bubbles} shows that for bubbles satisfying $S_n \rightarrow \infty$ as $n \rightarrow \infty$, large Lagrangian deformation must occur, and it occurs even within a time interval that shrinks to zero for $n$ large. We emphasize that we can pinpoint the location of large Lagrangian deformation to be the origin (which was an open problem to the best of our knowledge), while using contradiction arguments it is possible (see \cite{BL,EJ}), with less work, to show existence of large Lagrangian deformation (somewhere in the domain). 

		\item \textit{Dichotomy for bubbles}. Note that in the case when the sequence $a_k$ is summable, the initial vorticity belongs to the critical Besov space $B^0_{\infty,1}$ uniformly in $n$ (for the rigorous calculation, see \cite{MY} for example). There is uniqueness and existence in this space $B^0_{\infty,1}$ (\cite{V}), which in particular guarantees that the corresponding velocity gradient is uniformly bounded in $n$ for a short time interval. Therefore, we have the following dichotomy for bubbles: short-time large Lagrangian deformation occurs \textit{if and only if} the sequence $\{ a_k \}$ is not summable. 
		
		%\item \textit{Unbounded case}. Even when $\limsup_{k} a_k \rightarrow +\infty$ (i.e. when the sequence $\omega_{0,n}$ is not uniformly bounded in $L^\infty$), the lower bound \eqref{eq:LLD-bubbles} is still valid with a constant independent of $n$ but holds within a smaller time interval depending on $n$. One may follow the above proof except {that one needs to work with a more precise variant of \eqref{eq:keyLemma} and track the dependence of the error in $n$.}

		\item \textit{Sharpness of the growth rate}. It can be shown that with the data in \eqref{eq:bubbles}, we have \begin{equation*}
		\begin{split}
		|D\eta(t,0)| \le C(c_2),\qquad t \in [0,c_2/S_n]
		\end{split}
		\end{equation*} for any fixed constant $c_2 > 0$. This follows from the well-posedness of the Euler equations with vorticity in $B^0_{\infty,1}$ (\cite{PP}) and the fact that $\nrm{\omega_{0,n}}_{B^0_{\infty,1}} \sim S_n$. Comparing this with \eqref{eq:LLD-bubbles}, one sees that the lower bound is sharp at least during this time scale. Hence we must wait a bit longer to see large deformation at the origin. 
		
%		\item \textit{Large Lagrangian deformation in a small ball}. 
%After appealing to an ODE argument almost identical to the one given in the proof of Proposition \ref{BC} using \eqref{eq:stretch-ODE}, 
%It is possible to show that the growth rate stated in \eqref{eq:LLD-bubbles-shorttime} persists for points inside a small ball centered at the origin. The radius can be estimated similarly as well. 

		\item \textit{Case of the continuum}. Our considerations equally apply well to the ``continuum'' version of the bubbles; that is, we may take locally \begin{equation*}
		\begin{split}
		\omega_{0,n}(r,\theta) = \varphi_{2^{-n-1}} * (g(r) \chi(\theta) ), \quad 0 \le r < 1/2
		\end{split}
		\end{equation*} where $\chi \ge 0$ on $\theta \in [0,\pi/2]$ and $\chi(\theta) = -\chi(-\theta) = -\chi(\pi-\theta)$, and $g \ge 0$ is a bounded continuous function on $[0,1/2] \rightarrow [0,1]$. Here $a_k$ corresponds to $g(2^{-k})$ and $S_k$ to $\int_{2^{-k}}^{1} g(r)r^{-1}dr$. For an example, in the case $g(r) = |\ln r|^{-1/2-\eps}$, $\omega_0 = g(r)\sin(2\theta)$ belongs to $H^1$ (considered explicitly in \cite{EJ}), and using the method in this paper one can show that the corresponding solution escapes $H^1$ without appealing to a contradiction argument. 
		
	\end{itemize}

\end{remark}

\begin{proof}[Proof of Proposition \ref{prop:LLD2}]
	Recall that Proposition \ref{prop:LLD-bubbles} established creation of large Lagrangian deformation at the origin for the solution of 2D Euler with initial data $\omega_{0,n}$. We now prove that the deformation persists on the support of $\phi_{2n}$ and use this observation to complete the proof of Proposition \ref{prop:LLD2}. 
	
	For convenience, we set $\omega_{0,n}^S = \phi_{2n}$ and $\omega_n^S(t) := \omega_{0,n}^S \circ \eta_n^{-1}(t)$. We observe that for any $t$, $\nrm{\omega_n(t)}_{H^1}\ge \nrm{\omega_n^S(t)}_{H^1}$. From the definition, $\nrm{\omega_{0,n}^S}_{H^1} \gtrsim 1$. Moreover, we note that for $t\in [0,\delta]$, the support of $\omega_n^S(t)$ is contained in the ball of radius $2^{-n}$ centered at the origin. This can be easily shown using Lemma \ref{lem:Yud} (we choose $\delta=\frac{1}{2c\|\omega_0\|_{L^\infty}}$). 
%We fix some $T>0$, and now the constants in the estimates may depend implicitly on $T$. In the following, we take $t \in [0,T]$. 
	Recalling the simple estimate \begin{equation*}
	\begin{split}
	\nrm{\nb u_n(t)}_{C^{\frac{1}{2}}} \lesssim \nrm{\omega_n(t)}_{C^{\frac{1}{2}}} \lesssim 2^{\frac{n}{2}}, 
	\end{split}
	\end{equation*} we have \begin{equation*}
	\begin{split}
	|\nb u_n(t,x) - \nb u_n(t,0)| \lesssim 2^{\frac{n}{2}} |x|^{\frac{1}{2}},
	\end{split}
	\end{equation*} so that for $x \in \mathrm{supp}(\omega_n^S(t))$, \begin{equation*}
	\begin{split}
	|\nb u_n(t,x) - \nb u_n(t,0)| \lesssim 1. 
	\end{split}
	\end{equation*} In particular, since $\rd_1 u_{n,2}(t,0) = \rd_{2}u_{n,1}(t,0) = 0$ by the odd symmetry, \begin{equation*}
	\begin{split}
	|\rd_1 u_{n,2}(t,x)| + |\rd_2 u_{n,1}(t,x)| \lesssim 1. 
	\end{split}
	\end{equation*}
	
	We now fix some $x \in \mathrm{supp}(\omega_{0,n}^S)$, and estimate (dropping the subscript $n$ for simplicity) \begin{equation*}
	\begin{split}
	\frac{d}{dt}  \nb \eta(t,x)   = \nb u(t,0) \nb \eta(t,x)  + (\nb u(t,\eta) - \nb u(t,0))\nb\eta(t,x). 
	\end{split}
	\end{equation*} This can be estimated as follows: \begin{equation*}
	\begin{split}
	\frac{d}{dt}  |\nb \eta(t,x)| \ge (|\nb u(t,0)| - C) |\nb \eta(t,x)|.
	\end{split}
	\end{equation*} Since $\nb \eta(t,x)  = \mathrm{Id}$ at $t = 0$, for sufficiently large $n$, one can show easily from the lower bound of $\rd_1u_1(t,0)$ that  \begin{equation*}
	\begin{split}
	|\rd_1\eta_{n,1}(t,x)| = |\rd_2\eta_{n,2}(t,x)| \gtrsim (S_nt)^{\frac{c_0}{2}}. 
	\end{split}
	\end{equation*}
	
	Finally, \begin{equation*}
	\begin{split}
	\nb \omega_n^S(t) = \nb\omega_{0,n}^S \circ \eta_n^{-1} \nb\eta_n^{-1}
	\end{split}
	\end{equation*} and recalling the representation formula for $\nb\eta_n^{-1}$ in terms of $\nb\eta_n$ (see \eqref{eq:rep}) and $\nrm{\nb\omega_{0,n}^S}_{L^2}\gtrsim 1$, we obtain \begin{equation*}
	\begin{split}
	\nrm{\omega_n(t)}_{H^1}\ge \nrm{\nb \omega_n^S(t)}_{L^2} \gtrsim \inf_{x \in \mathrm{supp}(\omega_{0,n}^S)}|\rd_1\eta_{n,1}|\gtrsim (S_nt)^{\frac{c_0}{2}},
	\end{split}
	\end{equation*} which finishes the proof. \end{proof}

\section{Conclusion}

We prepared small-scale vortex blob and multi-scale odd-symmetric
vortices for the initial data, and showed that the corresponding
2D Euler flow creates vortex thinning. In turn, using this thinning, we showed that the corresponding 2D Navier-Stokes flow induces the enstrophy dissipation with strictly slower decaying rate than $Re^{-1}$. 

%%%%%%%%%%%%%%%%%%%%%%%%%%%%%%%%%%%%%%%%%
%%%%%%%%%%%%%%%%%%%%%%%%%%%%%%%%%%%%%%%%%%%

\vspace{0.5cm}
\noindent
{\bf Acknowledgments.}\ 
We thank Professor A. Mazzucato for inspiring communications and telling us about the article \cite{FML}. 
Research of TY  was partially supported by 
Grant-in-Aid for Young Scientists A (17H04825),
Grant-in-Aid for Scientific Research B (15H03621, 17H02860, 18H01136, 18H01135 and  20H01819), 
Japan Society for the Promotion of Science (JSPS). IJ has been supported by the POSCO Science Fellowship of POSCO TJ Park Foundation  and the National Research Foundation of Korea (NRF) grant (No. 2019R1F1A1058486).  %We sincerely thank the anonymous referee for a careful reading of the earlier manuscript and providing valuable comments as well as pointing out several errors. 
%%%%%%%%%%%%%%%%
%%%%%%%%%%%%%%%%%%%%%%%%%%%%%%%%%%
\bibliographystyle{amsplain} 

\end{document}